\theoremstyle{plain}
\newtheorem{thm}{Theorem}
\newtheorem{lemma}{Lemma}
\newtheorem{prop}[lemma]{Proposition}
\newtheorem{cor}[lemma]{Corollary}
\theoremstyle{definition}
\newtheorem{definition}{Definition}
\theoremstyle{remark}
\newcommand{\F}{\mathbb{F}}
\newcommand{\fp}{\F_p}
\newcommand{\Pol}{\mathop{\mathrm{Pol}}}
\renewcommand{\v}{\mathop{\mathrm{v}}}
\begin{document}
\title{On the $N$th linear complexity of automatic sequences}
\author{L\'aszl\'o M\'erai and Arne Winterhof\\
  \\ \small
  Johann Radon Institute for Computational and Applied Mathematics\\ \small
  Austrian Academy of Sciences\\ \small
  Altenbergerstr.\ 69,
  4040 Linz, Austria\\ \small
  \texttt{\{laszlo.merai,arne.winterhof\}@oeaw.ac.at}
}
\maketitle

\begin{abstract}
The $N$th linear complexity of a sequence 
is a measure of
predictability. Any unpredictable sequence must have large $N$th linear complexity. However, in this paper we show that for $q$-automatic sequences over $\F_q$
the converse is not true.  

We prove that any (not ultimately periodic) $q$-automatic sequence over~$\F_q$ has $N$th linear complexity of order of magnitude $N$. 
 For some famous sequences including the Thue--Morse and Rudin--Shapiro sequence we determine the exact values of their $N$th linear complexities. 
 These are non-trivial examples of predictable sequences with $N$th linear complexity of largest possible order of magnitude.
\end{abstract}
\textit{Keywords and phrases:}
automatic sequences, Thue--Morse sequence, Rudin--Shapiro sequence, pattern sequence, sum-of-digits sequence, Baum--Sweet sequence, paper folding sequence, linear complexity, expansion complexity, 
lattice profile, continued fractions 

\section{Introduction}

Let $k\ge 2$ be an integer. A {\it $k$-automatic sequence} $(u_n)$ over an alphabet ${\cal A}$ is the output sequence of a finite automaton, where the input is the $k$-ary digital expansion of $n$.
Automatic sequences have gained much attention during the last decades. For monographs and surveys about automatic sequences we refer to \cite{al,alsh,dr,RecurenceSequences}.

For a prime power $k=q$, $q$-automatic sequences $(u_n)$ over the finite field ${\cal A}=\F_q$ of $q$ elements can be characterized  
by a result of Christol, see \cite{christol} for prime $q$ and \cite{chka} for prime power $q$ as well as \cite[Theorem~12.2.5]{alsh}:
Let $$G(t)=\sum_{n=0}^{\infty}u_nt^n$$
be the {\em generating function} of the sequence $(u_n)$ over $\F_q$. 
Then $(u_n)$ is $q$-automatic over $\F_q$ if and only if $G(t)$ is algebraic over $\F_q[t]$, that is, there is a polynomial 
{\color{black} $h(s,t)\in \F_q[s,t]\setminus\{0\}$} 
such that $h(G(t),t)=0$.
(Note that for all $m=1,2,\ldots$ a sequence is $k$-automatic if and only if it is $k^m$-automatic by \cite[Theorem~6.6.4]{alsh} and even a slightly more general version of Christol's result holds:
{\color{black} For a prime $p$ and positive integers $m$ and $r$, $(u_n)$ is $p^m$-automatic over $\F_{p^r}$ if and only if $G(t)$ is algebraic over $\F_{p^r}$.})

Diem \cite{di} defined the {\em $N$th expansion complexity} $E_N(u_n)$ of $(u_n)$ as the least total degree of a nonzero polynomial $h(s,t)\in \F_q[s,t]$ with 
$$h(G(t),t)\equiv 0 \bmod t^N$$
if the first $N$ sequence elements are not all $0$ and $E_N(u_n)=0$ otherwise. Hence, the $q$-automatic sequences over $\F_q$ are exactly the sequences over $\F_q$ with  
$$E(u_n)=\sup_{N\ge 1} E_N(u_n)<\infty.$$
Sequences $(u_n)$ with small $E(u_n)$ are predictable and not suitable in cryptography. 

For example, the {\em Thue--Morse sequence} over $\F_2$ is defined by
$$t_n=\left\{ \begin{array}{cl} t_{n/2} & \mbox{if $n$ is even},\\ t_{(n-1)/2}+1 & \mbox{if $n$ is odd},
              \end{array}\right.\quad n=1,2,\ldots
$$
with initial value $t_0=0$. Taking
$$h(s,t)=s(t+1)^2+s^2(t+1)^3+t$$
its generating function $G(t)$ satisfies $h(G(t),t)=0$
and thus
$E(t_n)\le 5$. More precisely, in the proof of Corollary~\ref{cor:pattern} below we will see that $G(t)$ is not rational and thus $h(s,t)=(t+1)^3(s+G(t))(s+G(t)+\frac{1}{t+1})$
is irreducible over $\F_2(t)[s]$ and we get
$$E(t_n)=5.$$
 
The \textit{$N$th linear complexity} $L(u_n, N)$ of a sequence $(u_n)$ over $\F_q$ is the length~$L$ of a shortest linear recurrence relation satisfied by the first $N$ elements of $(u_n)$:
$$
 u_{n+L}=c_{L-1}u_{n+L-1}+\dots +c_1u_{n+1}+c_0u_n, \quad 0\leq n\leq N-L-1,
$$
for some $c_0,\ldots,c_{L-1}\in \F_q$.  
We use the convention that $L(u_n,N)=0$ if the first $N$ elements of $(u_n)$ are all zero and $L(u_n,N)=N$ if $u_0=\dots=u_{N-2}=0\ne u_{N-1}$.
The sequence $(L(u_n,N))$ is called {\em linear complexity profile} of $(u_n)$ and  
$$L(u_n)=\sup_{N\ge 1} L(u_n,N)$$ 
is the {\em linear complexity} of $(u_n)$. It is well-known (\cite[Lemma~1]{Niederreiter88-b}) that $L(u_n)<\infty$ if and only if $(u_n)$ is ultimately periodic, that is,
its generating function is rational: $G(t)=g(t)/f(t)$ with polynomials $g(t),f(t)\in \F_q[t]$.
The $N$th linear complexity is a measure for the unpredictability of a sequence as well. A large $N$th linear complexity (up to sufficiently large $N$)
is necessary (but not sufficient) for cryptographic applications. Sequences of small linear complexity are also weak in view of Monte-Carlo methods, see \cite{do,domewi,dowi,DorferWinterhof}. 
For more background on linear complexity and related measures of pseudorandomness we refer to \cite{mewi,Niederreiter2003,towi,Winterhof2010}. 
In particular, for ultimately periodic sequences the relation between linear complexity and expansion complexity was studied in \cite{meniwi}.

In this paper, we show that any $q$-automatic sequence over $\F_q$ which is not ultimately periodic has $N$th linear complexity of (best possible) order of magnitude $N$.
Hence, we provide many examples of sequences with high $N$th linear complexity which are still predictable (since $E(u_n)$ is small). 
For example, for the Thue--Morse sequence over $\F_2$ we prove, see Theorem~\ref{thm:RudinShapiro} below,
\begin{equation}\label{tmlin}
L(t_n,N)=2\left\lfloor\frac{N+2}{4}\right\rfloor,\quad N=1,2,\ldots 
\end{equation}

In Section~\ref{sec2} we prove a bound on the $N$th linear complexity of any $q$-automatic sequence over $\F_q$ which is not ultimately periodic. 
We apply this result to several famous automatic sequences including pattern sequences and sum-of-digits sequences. 
For example, for the Thue--Morse sequence our result implies
the bound
$$\left\lceil\frac{N-1}{2}\right\rceil \le L(t_n,N)\le \left\lfloor \frac{N}{2}\right\rfloor+1.$$
By $(\ref{tmlin})$ the lower bound is attained if $N\equiv 0,1\bmod 4$ and the upper bound if $N\equiv 2,3\bmod 4$. 

In Section~\ref{sec3} we determine the exact value of the $N$th linear complexity in the special case of binary pattern sequences with the all one pattern. 
The Thue--Morse sequence and the Rudin--Shapiro sequence are the simplest examples of such sequences.

Besides a small expansion complexity, the Thue--Morse and Rudin--Shapiro sequences have another deficiency, a very large correlation measure of order~$2$, see \cite{mauduitSarkozy-TM}. 
By a result of \cite{BrandstatterWinterhof}, small correlation measures of order $k$ (up to a sufficiently large $k$) imply large $N$th linear complexities. 
The converse is not true. For example, the correlation measure of order $4$ of the {\em Jacobi-sequence} $(j_n)$ of length $pq$ with two distinct odd primes $p$ and $q$ is of order of magnitude $pq$,
see \cite{risa}, which is based on the relation $j_n+j_{n+p}+j_{n+q}+j_{n+p+q}=0$ for all {\color{black}$n$ with $1\le n<pq$ and} $\gcd(n,pq)=1$. However, its linear complexity profile is quite large, 
see \cite{brwi}. This lower bound on the linear complexity profile can be obtained using an analog of the result of \cite{BrandstatterWinterhof}, see \cite{hepawawi}, 
for a modified correlation measure with {\em bounded lags}. Examples of sequences with large correlation measure of small order with bounded lags but large linear complexity profile were not known before.
However, the results of this paper for the Thue--Morse and Rudin--Shapiro sequence and the results of \cite{mauduitSarkozy-TM} (using lags $(0,1)$) show that both sequences are such examples. 

\section{Arbitrary $q$-automatic sequences over $\F_q$}
\label{sec2}

{\color{black} Ultimately periodic sequences} (that {\color{black} is linear recurrence sequences) are automatic and} correspond to rational generating functions \cite[Lemma 1]{Niederreiter88-b}. 
If $(u_n)$ is an {\color{black} ultimately periodic sequence}, then it has finite linear complexity. However, we show now that if $(u_n)$ is {\color{black} automatic but not} ultimately periodic, 
then  the $N$th linear complexity of $(u_n)$ is of order of magnitude $N$ for all $N$. 

\begin{thm}\label{thm:general}
Let $q$ be a prime power and $(u_n)$ be a $q$-automatic sequence over $\F_q$ which is not ultimately periodic. 
Let $h(s,t)=h_0(t)+h_1(t)s+\dots + h_d(t)s^d\in\F_q[s,t]$ be a non-zero polynomial with $h(G(t),t)=0$ with no rational zero.
Put $$M=\max_{0\leq i\leq d}\{\deg h_i-i\}.$$
Then we have
\[
 \frac{\displaystyle N-M}{d}\leq L(u_n,N)\leq \frac{\displaystyle (d-1)N+M+1}{d}.
\]
\end{thm}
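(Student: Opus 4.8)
The plan is to prove the lower bound first and then to extract the upper bound from it by a Berlekamp--Massey argument; both inequalities rely on the fact that substituting the ``best rational approximation'' of $G(t)$ into $h$ yields something that cannot vanish, precisely because $h$ has no rational zero. Observe at the outset that $h_0\neq 0$ (else $s=0$ would be a rational zero of $h$), so $M\ge\deg h_0\ge 0$, and that $d\ge 2$, since $d\le 1$ would make $G$ rational and hence a rational zero of $h$.

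\emph{The lower bound.} Write $L=L(u_n,N)$. If the first $N$ terms of $(u_n)$ all vanish, then $G\equiv 0\bmod t^N$, so $h_0(t)=h(0,t)\equiv h(G(t),t)=0\bmod t^N$, and since $h_0\neq 0$ this forces $N\le\deg h_0\le M$, i.e.\ $L=0\ge(N-M)/d$. If $u_0=\dots=u_{N-2}=0\neq u_{N-1}$ then $L=N\ge N/d\ge(N-M)/d$. In the remaining case $1\le L\le N-1$, and a shortest length-$L$ recurrence for $u_0,\dots,u_{N-1}$ translates, on the side of generating functions, into polynomials $c,p\in\F_q[t]$ with $\deg c\le L$, $\deg p\le L-1$, $c(0)\neq 0$ and $c(t)G(t)\equiv p(t)\bmod t^N$; since $c(0)\neq 0$ this means $G(t)\equiv p(t)/c(t)\bmod t^N$ in $\F_q[[t]]$. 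Put
\[
 H(t):=c(t)^d\,h\!\left(\frac{p(t)}{c(t)},t\right)=\sum_{i=0}^{d}h_i(t)\,p(t)^i\,c(t)^{d-i}\ \in\ \F_q[t].
\]
Because $p/c\in\F_q(t)$ and $h$ has no rational zero, $h(p/c,t)\neq 0$, hence $H\neq 0$. On the other hand $x-y$ divides $h(x,t)-h(y,t)$, so $h(p/c,t)\equiv h(G,t)=0\bmod t^N$ in $\F_q[[t]]$, and therefore $H\equiv 0\bmod t^N$; as $H$ is a nonzero polynomial, $\deg H\ge N$. Finally, by the definition of $M$,
\[
 \deg\!\big(h_i(t)\,p(t)^i\,c(t)^{d-i}\big)\le(M+i)+i(L-1)+(d-i)L=M+dL\qquad(0\le i\le d),
\]
so $\deg H\le M+dL$. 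Comparing the two estimates gives $N\le M+dL$, i.e.\ $L\ge(N-M)/d$.

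\emph{The upper bound.} If $L:=L(u_n,N)=0$ there is nothing to prove, so assume $L\ge 1$. The profile $N'\mapsto L(u_n,N')$ is nondecreasing and $L(u_n,0)=0$, so there is a least index $N_0$ with $L(u_n,N_0)=L$; then $1\le N_0\le N$ and $L(u_n,N_0)>L(u_n,N_0-1)$. At a point where the linear complexity strictly increases, Massey's theorem gives $L(u_n,N_0)=N_0-L(u_n,N_0-1)$, that is, $L(u_n,N_0-1)=N_0-L$. Inserting the lower bound already proved, applied at $N_0-1$,
\[
 N_0-L=L(u_n,N_0-1)\ge\frac{(N_0-1)-M}{d},
\]
we obtain $(d-1)N_0\ge dL-M-1$, and since $N_0\le N$ and $d\ge 2$ this yields $(d-1)N\ge dL-M-1$, i.e.\ $L\le\frac{(d-1)N+M+1}{d}$.

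\emph{Where the difficulty lies.} The substantive part is the lower bound: one must verify that $H$ is nonzero (this is the only place the hypothesis ``no rational zero'' is used), that $t^N\mid H$, and that $\deg H\le M+dL$, and one must handle the two degenerate prefixes --- the all-zero prefix being dealt with by the observation that it forces $M$ to be large. It is worth noting that no separability or characteristic-$p$ issue arises, since $h$ is never differentiated. The only point that might look as if it needs new work is the upper bound, but it is obtained for free from the lower bound through the Berlekamp--Massey relation $L(u_n,N_0)+L(u_n,N_0-1)=N_0$ at a jump of the profile.
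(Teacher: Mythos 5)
Your proof is correct and follows essentially the same route as the paper: the lower bound by substituting the rational approximant $p/c$ of $G$ coming from the length-$L$ recurrence into $h$, clearing denominators, and playing the degree bound $M+dL$ against divisibility by $t^N$ (nonvanishing being exactly where ``no rational zero'' enters), and the upper bound by feeding the lower bound back into the Berlekamp--Massey relation. The only cosmetic differences are that you invoke the jump identity $L(u_n,N_0)=N_0-L(u_n,N_0-1)$ once at the last increase point instead of running the induction $L(u_n,N)\le\max\{L(u_n,N-1),\,N-L(u_n,N-1)\}$ over all $N$, and that you spell out the degenerate prefixes, which the paper leaves implicit.
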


\begin{proof}
Since $(u_n)$ is not ultimately periodic, $G(t)\not\in \F_q(t)$ is not rational by \cite[Lemma~1]{Niederreiter88-b}.

Let $g(t)/f(t)\in\F_q(t)$ be a rational zero of $h(s,t)$ modulo $t^N$ with $\deg(f)\le L(u_n,N)$ and $\deg(g)<L(u_n,N)$. 
More precisely, put $L=L(u_n,N)$. Then we have  
\begin{equation}\label{lincomp} \sum_{\ell=0}^Lc_{\ell}u_{n+\ell}=0 \quad \mbox{for }0\le n\le N-L-1
\end{equation}
for some $c_0,\ldots,c_{L}\in \F_q$ with $c_L\neq 0$.
Take
\[
f(t)=\sum_{\ell=0}^L c_{\ell}t^{L-\ell} 
\]
and 
\[
g(t)=\sum_{m=0}^{L-1}\left(\sum_{\ell=L-m}^Lc_\ell u_{m+\ell-L}\right)t^m 
\]
and verify 
$$f(t)G(t)\equiv g(t)\bmod t^N:$$
$$f(t)G(t)\equiv \sum_{\ell=0}^L c_\ell t^{L-\ell}\sum_{k=0}^{N-1}u_k t^k
\equiv \sum_{m=0}^{N-1} \sum_{\ell=0}^L c_\ell u_{m+\ell-L} t^m\equiv g(t)\bmod t^N,$$ 
where we put $m=L-\ell+k$ and used $(\ref{lincomp})$ with $n=m-L$ in the last step.

Then
\[
 h_0(t)f^d(t)+h_1(t)g(t)f^{d-1}(t)+\dots + h_d(t)g(t)^d=K(t) t^N.
\]
Here $K(t)\neq 0$ since $h(s,t)$ has no rational zero. Comparing the degrees of both sides we get
\[
 dL+M\geq N
\]
which gives the lower bound.

The upper bound for $N=1$ is trivial. For $N\geq 2$ the result follows from the well-known bound (see for example \cite[Lemma 3]{DorferWinterhof})
\[
 L(u_n,N)\leq \max\left\{L(u_n,N-1), N-L(u_n,N-1) \right\}
\]
by induction
\begin{eqnarray*} L(u_n,N)&\le& \max\left\{\frac{(d-1)(N-1)+M+1}{d},N-\frac{N-1-M}{d}\right\}\\ &\le& \frac{(d-1)N+M+1}{d},
\end{eqnarray*}
where we also used the lower bound on $L(u_n,N-1)$ to estimate $N-L(u_n,N-1)$.
\end{proof}

\subsection*{Examples}
Now we state bounds on the $N$th linear complexity of some famous automatic sequences as corollaries of Theorem \ref{thm:general}.
In the following let $p$ be a prime.

\subsubsection*{Pattern sequences}

Let $P\in \F_p^k\setminus\{00\ldots 0\}$ be a \emph{pattern of length $k$}. Let $e_P(n)$ be the number of occurrences of $P$
in the $p$-ary representation of $n$. For example if $p=2$, then $e_{11}(7)=2$, $e_1(9)=2$, $e_{11}(9)=0$ and $e_{101}(21)=2$.

For a pattern $P$ of length $k$ define the sequence $(r_n)$ by
\begin{equation*}
 r_n\equiv e_P(n) \mod p, \quad r_n\in\fp,\quad n=0,1,\dots
\end{equation*}
The sequence $(r_n)$ over $\F_p$ satisfies the following recurrence relation
\begin{equation}\label{eq:recurence}
 r_n=
 \left\{
 \begin{array}{cl}
 r_{\lfloor n/p\rfloor}+1  & \text{if } n\equiv a \mod p^k,\\
 r_{\lfloor n/p\rfloor}  & \text{otherwise,}
 \end{array}
 \right.
 n=1,2,\dots
\end{equation}
with initial value $r_0=0$, where $a$ is the {\color{black} integer in the range} $0< a <p^k$ such that its  $p$-ary expansion {\color{black} is} the pattern $P$.

Classical examples for binary pattern sequences are the Thue--Morse sequence ($p=2$, $k=1$ and $P=1$ ($a=1$)) and the {\em Rudin--Shapiro sequence} ($p=2$, $k=2$ and $P=11$ ($a=3$)).

\begin{cor}\label{cor:pattern}
Let $p$ be a prime number and $a,k$ integers with $1\leq a<p^k$. If $(r_n)$ is the pattern sequence defined by \eqref{eq:recurence}, then
\[
  \frac{N-p^k+1}{p}\leq L(r_n,N) \leq \frac{(p-1)N+p^k}{p}.
\]
\end{cor}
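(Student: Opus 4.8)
The plan is to produce an explicit nonzero polynomial $h(s,t)\in\F_p[s,t]$ with $h(G(t),t)=0$ and no rational zero, where $G(t)=\sum_{n\ge 0}r_nt^n$, and then to read the two inequalities off Theorem~\ref{thm:general} after computing the relevant values of $d$ and $M$.

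First I would turn the recurrence \eqref{eq:recurence} into a functional equation for $G$. Splitting the index as $n=pm+j$ with $0\le j<p$, the recurrence gives, for every such $n$ --- including $n=0$, since $r_0=0$ and $0<a<p^k$ --- that $r_{pm+j}=r_m$ when $pm+j\not\equiv a\bmod p^k$ and $r_{pm+j}=r_m+1$ otherwise. Summing $\sum_{m,j}r_{pm+j}t^{pm+j}$ accordingly and using the characteristic-$p$ identities $1-t^p=(1-t)^p$ and $1-t^{p^k}=(1-t)^{p^k}$, I get
\[
G(t)=\Bigl(\sum_{j=0}^{p-1}t^j\Bigr)G(t^p)+\sum_{n\equiv a\,(p^k)}t^n=(1-t)^{p-1}G(t^p)+\frac{t^a}{(1-t)^{p^k}}.
\]
Since $r_n\in\F_p$, Frobenius gives $G(t^p)=G(t)^p$, so multiplying by $(1-t)^{p^k}$ yields $h(G(t),t)=0$ with
\[
h(s,t)=(1-t)^{p^k+p-1}s^p-(1-t)^{p^k}s+t^a\in\F_p[s,t].
\]
In particular $G$ is algebraic over $\F_p(t)$, so $(r_n)$ is $p$-automatic over $\F_p$ by Christol's theorem, and the automaticity hypothesis of Theorem~\ref{thm:general} is met. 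Here $d=\deg_s h=p$, and with $h_0=t^a$, $h_1=-(1-t)^{p^k}$, $h_p=(1-t)^{p^k+p-1}$ and all other $h_i=0$,
\[
M=\max\{a,\ p^k-1,\ (p^k+p-1)-p\}=\max\{a,p^k-1\}=p^k-1,
\]
using $1\le a<p^k$. Substituting $d=p$ and $M=p^k-1$ into Theorem~\ref{thm:general} gives exactly the claimed bounds, so it remains only to check the two remaining hypotheses: that $(r_n)$ is not ultimately periodic and that $h$ has no rational zero.

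These two turn out to be equivalent. The $s$-part $(1-t)^{p^k+p-1}s^p-(1-t)^{p^k}s$ of $h$ is $\F_p$-linear in $s$, and its kernel in a fixed algebraic closure of $\F_p(t)$ is $\{c/(1-t):c\in\F_p\}$: a nonzero $\alpha$ in the kernel satisfies $((1-t)\alpha)^{p-1}=1$, so $(1-t)\alpha$ is one of the $p-1$ elements of $\F_p^{\ast}$. Hence the $p$ roots of $h(\,\cdot\,,t)$ are precisely $G(t)+c/(1-t)$ with $c\in\F_p$, so $h$ has a rational zero if and only if $G(t)\in\F_p(t)$, which by \cite[Lemma~1]{Niederreiter88-b} holds if and only if $(r_n)$ is ultimately periodic. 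Thus it suffices to show $G(t)\notin\F_p(t)$.

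To do this I would assume $G=g/f$ with $\gcd(f,g)=1$, clear denominators in the functional equation to obtain the polynomial identity $(1-t)^{p^k}gf^{p-1}=(1-t)^{p^k+p-1}g^p+t^af^p$ (here $g\ne 0$, else $t^af^p=0$), and then compare $(1-t)$-adic valuations. Writing $v=v_{1-t}$ and $\beta=v(f)$, and noting $v(t^a)=0$, the key observation is that $p^k+p-1\equiv-1\bmod p$ forces the two terms on the right-hand side to have \emph{distinct} valuations $p^k+p-1+p\,v(g)$ and $p\beta$; hence the right-hand side has valuation equal to the smaller of the two. Equating this with the left-hand side valuation $p^k+v(g)+(p-1)\beta$ and splitting into the two cases according to which term on the right is smaller, each case collapses to the impossible inequality $p^k<1$, a contradiction. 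This valuation computation is the only step with real content; deriving $h$, listing its roots, and substituting into Theorem~\ref{thm:general} are routine. Once $G$ is known to be irrational, Theorem~\ref{thm:general} applies with the $d$ and $M$ above and the corollary follows.
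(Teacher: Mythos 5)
Your proposal is correct and follows essentially the same route as the paper: derive the functional equation $(1-t)^{p^k}G=(1-t)^{p^k+p-1}G^p+t^a$, feed the resulting $h$ with $d=p$ and $M=p^k-1$ into Theorem~\ref{thm:general}, and verify that $h$ has no rational zero. Your verification of that last point (identifying all roots as $G+c/(1-t)$ and then ruling out $G\in\F_p(t)$ by a $(1-t)$-adic valuation argument) is a slightly more structured but equivalent version of the paper's direct divisibility-at-$(t-1)$ contradiction.
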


\begin{proof}
By the recurrence relation \eqref{eq:recurence}  we have
\begin{equation*}
 G(t)=(1+t+\dots + t^{p-1})G(t)^p+\sum_{i=0}^\infty t^{p^k i+a}.
\end{equation*}
Multiplying with $t^{p^k}-1=(t-1)^{p^k}$ we have
\begin{equation}\label{eq:pattern-gen-1}
(t-1)^{p^k}G(t)=(t-1)^{p^k+p-1}G(t)^p-t^a,
\end{equation}
{\color{black} see also \cite[Th\'eor\`em 3]{chka}.}
Put $h(s,t)=(t-1)^{p^k+p-1}s^p-(t-1)^{p^k}s-t^a$. Then $h(s,t)$ has no rational zero in $\fp(t)$. Indeed, if $g(t)/f(t)$ is a zero (with $\gcd(g,f)=1$), that is
\[
 (t-1)^{p^k+p-1}g(t)^p-(t-1)^{p^k}g(t)f(t)^{p-1}-t^af(t)^p=0,
\]
then $(t-1)\mid f(t)$ and so $(t-1)^{p^k+p-1}\mid f(t)^p$ thus $(t-1)^{p^{k-1}+1}\mid f(t)$ which means that $t-1\mid g(t)$, a contradiction.

Finally, the result follows from Theorem \ref{thm:general}.
\end{proof}

\subsubsection*{The sum-of-digits sequence}
Let $k>1$ be an integer and $\sigma_m(n)$ be the sum of digits of $n$ in the $k$-ary representation. Then define
$s_n=\sigma_m(n)\bmod k$. Clearly $(s_n)$ satisfies the following recurrence relation
\begin{equation}\label{eq:sum-of-digits}
 s_n\equiv s_{\lfloor \frac{n}{k}\rfloor}+n \mod k,\quad n=1,2,\ldots
\end{equation}
with initial value $s_0=0$.

\begin{cor}\label{cor:sum-of-digits}
Let $p$ be a prime number and $(s_n)$ be the sum-of-digit sequence modulo $p$ defined by \eqref{eq:sum-of-digits} with $k=p$. 
Then
\[
   \frac{N-1}{p} \leq L(s_n,N) \leq \frac{(p-1)N+2}{p}.
\]
\end{cor}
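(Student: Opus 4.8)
The plan is to follow exactly the template of Corollary~\ref{cor:pattern}: derive an algebraic equation for the generating function $G(t)$ of $(s_n)$ over $\fp$, show the defining polynomial $h(s,t)$ has no rational zero in $\fp(t)$, and then read off the bounds from Theorem~\ref{thm:general} by computing $d$ and $M$. First I would turn the recurrence \eqref{eq:sum-of-digits} with $k=p$ into a functional equation. Writing $n = pm + j$ with $0\le j<p$, the term ``$+n \bmod p$'' contributes $+j$, so $s_{pm+j} = s_m + j$ in $\fp$; summing $\sum_{m,j} (s_m+j)t^{pm+j}$ gives
\[
 G(t) = (1+t+\cdots+t^{p-1})\,G(t)^p + \sum_{j=0}^{p-1} j\, t^j \cdot \frac{1}{1-t^p},
\]
using $s_{pm}^{\phantom p}$ handled by the $G(t)^p$ block (note $G(t^p)=G(t)^p$ over $\fp$) and $\sum_{m\ge 0} t^{pm} = 1/(1-t^p)$. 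Let $Q(t)=\sum_{j=0}^{p-1} j t^j$, a polynomial of degree $p-1$. Multiplying through by $1-t^p = -(t-1)^p$ (over $\fp$) I get
\[
 (t-1)^p G(t) = (t-1)^{2p-1} G(t)^p + Q(t),
\]
so one may take $h(s,t) = (t-1)^{2p-1} s^p - (t-1)^p s + Q(t)$, of degree $d=p$ in $s$.

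Next I would verify $h$ has no rational zero in $\fp(t)$, imitating the divisibility argument in Corollary~\ref{cor:pattern}. Suppose $g/f$ is a zero with $\gcd(g,f)=1$, i.e.\ $(t-1)^{2p-1} g^p - (t-1)^p g f^{p-1} - Q(t) f^p = 0$. The key point is that $Q(1) = \sum_{j=0}^{p-1} j = \binom{p}{2} = p(p-1)/2 \equiv 0 \bmod p$ (for $p$ odd; the case $p=2$ gives $Q(t)=t$, $Q(1)=1\ne0$, and must be checked separately — there the argument is even easier since $(t-1)\mid f$ forces $(t-1)^{3}\mid f^2$, etc.). For odd $p$, write $Q(t) = (t-1) Q_1(t)$; then $(t-1)$ divides every term except possibly $(t-1)^p g f^{p-1}$ — actually it divides that too since $p\ge2$ — so $(t-1)$ divides $0$ trivially, which is not yet a contradiction. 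The cleaner route: reduce mod the highest power of $(t-1)$. From $(t-1)^p g f^{p-1} = (t-1)^{2p-1} g^p - Q(t) f^p$, if $(t-1)\nmid f$ then $(t-1)\mid g$, contradicting $\gcd(g,f)=1$ unless the $(t-1)$-adic valuations balance; if $(t-1)\mid f$ with valuation $e\ge1$, then $(t-1)^{pe}\mid Q(t)f^p$ needs reconciling with $v_{t-1}(Q) =: c$ (here $c=1$ for odd $p$), forcing $pe \le$ the minimum of $2p-1$ and $p + e(p-1) + c$; chasing the valuation inequality $p\cdot v_{t-1}(g) + 2p-1$ vs $p\cdot v_{t-1}(f)$ vs $p + (p-1)v_{t-1}(f) + c$ yields a congruence obstruction mod $p$ exactly as in Corollary~\ref{cor:pattern}, since $2p-1 \equiv p-1$, $c=1$, and the three values cannot be made equal modulo $p$ for integer valuations. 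I expect this valuation bookkeeping to be the main obstacle — it is routine but must be done carefully, and one should double-check it genuinely rules out a rational zero rather than merely constraining it; examining the behaviour at $t=1$ (where $G$ has a pole of half-integer-type order forced by the $(t-1)^{2p-1}$ versus $(t-1)^p$ mismatch) is the conceptual reason it works, mirroring the ``$p^{k-1}+1$'' step in the pattern case.

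Finally I would compute $M = \max_{0\le i\le p}\{\deg h_i - i\}$ for $h(s,t) = Q(t) + \big(-(t-1)^p\big)s + \cdots + (t-1)^{2p-1} s^p$: the contributions are $\deg Q - 0 = p-1$, $p - 1$, and $(2p-1) - p = p-1$, so $M = p-1$. Plugging $d=p$, $M=p-1$ into Theorem~\ref{thm:general} gives
\[
 \frac{N-(p-1)}{p} \le L(s_n,N) \le \frac{(p-1)N + (p-1) + 1}{p} = \frac{(p-1)N+p}{p},
\]
which is slightly weaker than the claimed $\frac{N-1}{p}\le L(s_n,N)\le \frac{(p-1)N+2}{p}$; to get the sharper constants I would either find a polynomial $h$ of smaller $M$ (e.g.\ by clearing only $1-t$ instead of $(1-t)^p$, at the cost of non-integer exponents — not allowed — so instead multiply \eqref{eq:sum-of-digits}'s generating identity by $1-t$ only after first isolating the $\frac{1}{1-t^p}$ more cleverly, noting $\frac{Q(t)}{1-t^p}$ may simplify because $(1-t)\mid Q(t)$ for odd $p$, giving $\frac{Q_1(t)}{1+t+\cdots+t^{p-1}}$ and hence the identity $(1+t+\cdots+t^{p-1})(G(t) - G(t)^p) \cdot\text{(something)}$), or re-examine the degree of the \emph{reduced} $h$ after cancelling a common factor of $(t-1)$ from all three coefficients when possible. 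The honest version of this last paragraph is: after obtaining the bound with $M=p-1$, I would look for the cancellation that drops $M$ to $1$, which I expect comes from dividing the functional equation by the largest power of $(t-1)$ common to all terms — and verifying that the resulting $h$ still has no rational zero (same argument, shifted valuations).
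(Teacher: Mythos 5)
Your overall strategy is exactly the paper's: derive a functional equation from \eqref{eq:sum-of-digits}, show the resulting $h(s,t)$ has no rational zero, and apply Theorem~\ref{thm:general}. However, as you yourself observe, your polynomial $h(s,t)=(t-1)^{2p-1}s^p-(t-1)^ps+Q(t)$ with $Q(t)=\sum_{a=1}^{p-1}at^a$ has $M=p-1$ and only yields $\frac{N-p+1}{p}\le L(s_n,N)\le\frac{(p-1)N+p}{p}$, which is weaker than the statement. The decisive ingredient you leave undone is the closed form
\[
Q(t)=\sum_{a=1}^{p-1}at^a=t\,\frac{d}{dt}\Bigl(\frac{1-t^p}{1-t}\Bigr)=t\,\frac{d}{dt}(1-t)^{p-1}=t(1-t)^{p-2}\quad\text{in }\fp[t],
\]
so the $(t-1)$-adic valuation of $Q$ is $p-2$, not $1$ as you assert for odd $p$ (already for $p=5$ one has $Q=t+2t^2+3t^3+4t^4=t(1-t)^3$). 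Dividing your equation by $(1-t)^{p-2}$ gives precisely the paper's polynomial $h(s,t)=(1-t)^{p+1}s^p-(1-t)^2s+t$, whence $d=p$, $M=1$, and the stated bounds follow. This identity also rescues the no-rational-zero step: with constant coefficient $t$ (rather than $Q$, which vanishes at $t=1$ for odd $p$ and so gives no information there), substituting $t=1$ into $(1-t)^{p+1}g^p-(1-t)^2gf^{p-1}+tf^p=0$ forces $(t-1)\mid f$, hence $(t-1)\nmid g$, and comparing the $(t-1)$-adic valuations $p+1$, $2+(p-1)e$, $pe$ of the three terms (with $e=\v(f)\ge 1$) shows the minimum is attained exactly once, a contradiction. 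Your sketch of this argument for the unreduced equation rests on the wrong value of the valuation of $Q$ and does not close; once the identity above is inserted, the rest of your plan goes through and coincides with the paper's proof.
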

For $p=2$ $(s_n)$ is the Thue--Morse sequence again and in this case Corollary \ref{cor:sum-of-digits} coincides with Corollary \ref{cor:pattern}.

\begin{proof}
As before, by the recurrence relation \eqref{eq:sum-of-digits}  we have
\begin{align}\label{eq:sum-of-digits-gen-1}
 G(t)&=(1+t+\dots + t^{p-1})G(t)^p+\sum_{i=0}^\infty t^{ip }\cdot \sum_{a=1}^{p-1}at^a \notag \\
     &=\frac{1-t^p}{1-t}G(t)^p+\frac{1}{1-t^{p}}\cdot t(1-t)^{p-2}.\notag
\end{align}
Multiplying by $(1-t)^{2}$ we get that $G(t)$ is a root of
\[
 h(s,t)=(1-t)^{p+1}s^p- (1-t)^{2}s+ t.
\]
As in the previous proof, it can be shown that $h(s,t)$ has no rational zero in $\fp(t)$, thus the result follows from Theorem \ref{thm:general}.
\end{proof}

\subsubsection*{Baum--Sweet sequence}
The Baum--Sweet sequence $(b_n)$ is a $2$-automatic sequence defined by the rule $b_0=1$ and for $n\ge 1$
$$
 b_n=
 \left\{ 
 \begin{array}{cl}
  1& \text{if the binary representation of $n$ contains no block of consecutive} \\
   &  \text{$0$'s of odd length;}\\
  0& \text{otherwise.}
 \end{array}
 \right.
$$
Equivalently we have for $n\ge 1$ of the form $n=4^km$ with $m$ not divisible by $4$
\begin{equation*}
b_n=\left\{\begin{array}{ll} 0 & \mbox{if $m$ is even},\\ b_{(m-1)/2} & \mbox{if $m$ is odd}.\end{array}\right.
\end{equation*}

\begin{cor}
 Let $(b_n)$ be the Baum--Sweet sequence. Then
 \[
  \frac{N}{3}\leq L(b_n,N)\leq \frac{2N+1}{3}.
 \]
\end{cor}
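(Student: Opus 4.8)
The plan is to apply Theorem~\ref{thm:general} exactly as in the proofs of Corollaries~\ref{cor:pattern} and~\ref{cor:sum-of-digits}, so the only real work is to produce an algebraic equation for the generating function $G(t)=\sum_n b_n t^n$ of the Baum--Sweet sequence over $\F_2$, with parameters $d$ and $M$ giving $d=3$ and $M=1$. First I would translate the recursion for $b_n$ (in the form using $n=4^k m$ with $4\nmid m$) into a functional equation for $G(t)$. Splitting according to the residue of $n$ modulo $4$: the terms with $n\equiv 2\pmod 4$ contribute $0$; the terms with $n\equiv 0 \pmod 4$ reproduce $G(t^4)=G(t)^4$ (using the Frobenius identity over $\F_2$); and the terms with $n$ odd, say $n=2j+1$, give $\sum_j b_j t^{2j+1}$, i.e. $t\,G(t^2)=t\,G(t)^2$. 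Keeping track of the initial value $b_0=1$ and the shift indices carefully, this should yield a relation of the shape
\[
G(t)=1+t\,G(t)^2+t^4\,G(t)^4
\]
or a close variant (possibly with a correction polynomial of small degree coming from the $4^k m$ bookkeeping); the key point is that after clearing denominators one arrives at a polynomial $h(s,t)\in\F_2[s,t]$ of degree $3$ in $s$ with $\max_i(\deg h_i - i)=1$.

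Once the functional equation is in hand, I would set $h(s,t) = t^4 s^4 + t s^2 + s + 1$ — or whatever the precise computation gives — wait, degree $3$: more likely the correct equation, after the substitution, is cubic in $G$. Let me reconsider: the Baum--Sweet generating function is classically known to satisfy $G(t)^3 \cdot(\text{something}) + \dots$; in any case the bounds in the statement, namely $N/3 \le L \le (2N+1)/3$, force $d=3$ and $M=1$ in Theorem~\ref{thm:general}, since $\frac{N-M}{d}=\frac{N-1}{3}$ — hmm, that gives $(N-1)/3$, not $N/3$. So actually $M=0$ and $d=3$, giving lower bound $N/3$ and upper bound $\frac{2N+0+1}{3}=\frac{2N+1}{3}$, which matches exactly. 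Thus the target is a polynomial $h(s,t)=h_3(t)s^3+h_1(t)s+h_0(t)$ (or with an $s^2$ term) where $\deg h_3 = 3$, $\deg h_1 \le 1$, $\deg h_0 \le 0$, and $h_i$ of degree $\le i$ in general, with equality $\deg h_3 - 3 = 0$ achieving the max. So I would massage the functional equation to get $h(s,t)$ with $\deg_t h_i \le i$ for all $i$ and $\deg_t h_3 = 3$.

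The second thing to check is that $h(s,t)$ has no rational zero $g(t)/f(t)\in\F_2(t)$; this is needed both to invoke Theorem~\ref{thm:general} and, more basically, because $(b_n)$ must be verified not to be ultimately periodic (though that is classical and could also be cited). Here I would argue as in Corollary~\ref{cor:pattern}: substitute $g/f$ in lowest terms into $h$, clear denominators, and derive a divisibility contradiction by looking at the factorization of the leading coefficient $h_3(t)$ (which should be a power of $t$, or $t^3$), forcing $t\mid f$ and then $t\mid g$, contradicting $\gcd(f,g)=1$. Alternatively one can note $b_0 = 1 \ne 0$ rules out $t\mid G$, and combine with a valuation argument at $t=1$ or wherever the other factors of $h_3$ sit.

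The main obstacle I anticipate is getting the functional equation for $G(t)$ exactly right, in particular handling the $n = 4^k m$ structure cleanly: the naive split by $n \bmod 4$ relates $G(t)$ to $G(t^4)$ on the $n\equiv 0$ branch, but the recursion only ``looks'' one level of base-$4$ down when $4\nmid m$, whereas for $4 \mid n$ one must iterate — so one has to be careful that the equation closes up correctly and that all low-order terms (the contribution of $b_0$, and of small $n$) are accounted for, since a stray monomial of degree up to $4$ could spoil the clean values $d=3$, $M=0$ and hence the exact constants in the bound. Once the equation is pinned down with $d=3$ and $M=0$ and the no-rational-zero check is done, the corollary follows immediately from Theorem~\ref{thm:general}, with no further estimation needed.
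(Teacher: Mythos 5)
Your overall strategy is the paper's: apply Theorem~\ref{thm:general} to a cubic equation for $G(t)$ with $d=3$ and $M=0$, after checking the no-rational-zero hypothesis; your reverse-engineering of $d=3$, $M=0$ from the stated constants is correct. But there is a genuine gap exactly where you flag your ``main obstacle'': you never actually produce the functional equation, and the candidates you write down are wrong. The correct bookkeeping is: $b_{2j+1}=b_j$ contributes $t\,G(t)^2$; the terms $n\equiv 2\pmod 4$ contribute $0$; and $b_{4n'}=b_{n'}$ contributes $G(t^4)-b_0=G(t)^4-1$ (there is no factor $t^4$ --- quartering the index gives $G(t^4)$, nothing more). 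Hence $G=1+tG^2+(G^4-1)=tG^2+G^4$, a \emph{quartic} relation. The essential missing step is to divide by $G\ne 0$ to obtain the cubic $h(s,t)=s^3+ts+1$, which is the equation the paper uses (citing Christol et al.). You cannot instead feed the quartic $s^4+ts^2+s$ into Theorem~\ref{thm:general}: it has the rational zero $s=0$, so the hypothesis fails outright, and even setting that aside it would only yield $d=4$ and the weaker bounds $N/4\le L(b_n,N)\le (3N+1)/4$. Your tentative equation $G=1+tG^2+t^4G^4$ is simply not satisfied by the Baum--Sweet generating function.

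A secondary issue: the no-rational-zero argument you sketch (forcing $t\mid f$ from the leading coefficient, as in Corollary~\ref{cor:pattern}) does not transfer, because for $h(s,t)=s^3+ts+1$ the leading coefficient $h_3=1$ has no factor of $t$. The correct short argument is different: if $g/f$ in lowest terms were a zero, then $g^3+tgf^2+f^3=0$, so $f\mid g^3$ and $f$ must be constant; a polynomial root $g$ of degree $e\ge 1$ is impossible since $\deg g^3=3e>e+1\ge\deg(tg+1)$, and $g=0$, $g=1$ are excluded by direct substitution. With the cubic in hand and this check done, the corollary does follow from Theorem~\ref{thm:general} exactly as you intend.
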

\begin{proof}
It can be easily checked that the generating function $G(t)$ of $(b_n)$ is the unique root of $h(s,t)=s^3+t\cdot s+1$, {\color{black} see \cite[p.\ 403]{chka}}. 
Then the bounds follow from Theorem~\ref{thm:general}.
\end{proof}

\subsubsection*{Regular paperfolding sequence}
The value of any given term $v_n\in \F_2$ in the regular paperfolding sequence can be defined as follows. If $n = m\cdot2^k$ where $m$ is odd, then
\begin{equation*}
 v_n=
 \left\{ 
 \begin{array}{cl}
  1& \text{if } m\equiv 1 \mod 4,\\
  0& \text{if } m\equiv 3 \mod 4,
 \end{array}
 \right. n=1,2,\ldots
\end{equation*}
and any $v_0\in \F_2$.

\begin{cor}
 Let $(v_n)$ be the regular paperfolding sequence. Then
 \[
  \frac{N-3}{2}\leq L(v_n,N)\leq \frac{N}{2}+2.
 \]
\end{cor}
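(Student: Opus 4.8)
The plan is to follow exactly the template of the proofs of Corollary~\ref{cor:pattern} and Corollary~\ref{cor:sum-of-digits}: produce a nonzero $h(s,t)\in\F_2[s,t]$ with $h(G(t),t)=0$ and no rational zero, then read off $d$ and $M$ and quote Theorem~\ref{thm:general}. To get the functional equation I would split $G(t)=\sum_{n\ge0}v_nt^n$ according to the parity of the index. The defining self-similarity gives $v_{2m}=v_m$ for every $m\ge1$ (if $m=2^j m'$ with $m'$ odd, then $2m=2^{j+1}m'$), while the free value $v_0$ enters only the even part; hence $\sum_{m\ge0}v_{2m}t^{2m}=\sum_{m\ge0}v_mt^{2m}=G(t^2)=G(t)^2$ over $\F_2$. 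For the odd indices, $v_{2m+1}=1$ exactly when $2m+1\equiv1\bmod 4$, so $\sum_{n\ \text{odd}}v_nt^n=\sum_{j\ge0}t^{4j+1}=t/(1-t^4)$, and over $\F_2$ we have $1-t^4=(1+t)^4$.

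Combining the two parts yields $G(t)=G(t)^2+t/(1+t)^4$; clearing denominators shows that $G(t)$ is a root of
\[
 h(s,t)=(1+t)^4s^2+(1+t)^4s+t .
\]
Here $h_0(t)=t$ and $h_1(t)=h_2(t)=(1+t)^4$, so $d=2$ and $M=\max\{1-0,\ 4-1,\ 4-2\}=3$. Substituting $d=2$, $M=3$ into Theorem~\ref{thm:general} gives precisely $\frac{N-3}{2}\le L(v_n,N)\le\frac{(2-1)N+3+1}{2}=\frac N2+2$, which is the claimed bound, once we know $h$ has no rational zero.

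For the latter, note that over an algebraic closure of $\F_2(t)$ the equation $s^2+s=t/(1+t)^4$ has the two roots $G(t)$ and $G(t)+1$; any zero of $h$ in $\F_2(t)$ would therefore force $G(t)\in\F_2(t)$, i.e.\ $(v_n)$ ultimately periodic, which it is not. One may either invoke the classical fact that the paperfolding sequence is not eventually periodic, or argue directly as in Corollary~\ref{cor:pattern}: from $(1+t)^4g(g+f)=tf^2$ for a reduced solution $g/f$ one gets $g\mid t$ and $f^2\mid(1+t)^4$, so $f$ is a power of $1+t$ of exponent $\le 2$, and checking the (few) remaining possibilities for $g$ and $f$ leads to a contradiction in each case. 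The routine parts are the parity splitting and this gcd argument; the only thing needing a little care is the reduction modulo $2$ (the identities $G(t^2)=G(t)^2$ and $1-t^4=(1+t)^4$) together with the observation that the arbitrary choice of $v_0$ is harmless, since $v_0$ contributes only to the even part and is absorbed into $G(t^2)$. I do not expect a genuine obstacle.
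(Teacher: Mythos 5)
Your proposal is correct and follows essentially the same route as the paper: you arrive at the identical polynomial $h(s,t)=(t^4+1)s^2+(t^4+1)s+t$ (the paper merely states it "can be checked", and your parity splitting is the natural verification), note the factorization $h=(t^4+1)(s+G)(s+G+1)$ to rule out rational zeros, and apply Theorem~\ref{thm:general} with $d=2$, $M=3$. The direct gcd argument you sketch as an alternative is also sound and makes the "no rational zero" step self-contained without invoking non-periodicity of the paperfolding sequence.
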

\begin{proof}
It can be checked that the generating function $G(t)$ of $(v_n)$ is a zero of $h(s,t)=(t^4+1)\cdot s^2+(t^4+1)\cdot s+t=(t^4+1)(s+G)(s+G+1)$
which has no rational zero. 
Then the bounds follow from Theorem \ref{thm:general}.
\end{proof}

\subsubsection*{A sequence with perfect lattice profile and perfect linear complexity profile}
The generating function $G(t)$ of the sequence $(w_n)$ over $\F_2$ defined by 
$$w_{2n}=1\quad \mbox{and} \quad w_{2n+1}=w_n+1,\quad n=0,1,\ldots$$
satisfies the functional equation
$$t(t+1)G(t)^2+(t+1)G(t)+1=0.$$
{\color{black} (This sequence or its complement are sometimes called {\it Toeplitz sequence} or {\it period doubling sequence}, see https://oeis.org/A096268.)}
Hence, $L(w_n,N)=\lfloor \frac{N+1}{2}\rfloor$. Such a linear complexity profile is called perfect. 
This is the only sequence with both a perfect linear complexity profile and a perfect 'lattice profile', see \cite{domewi} for more details. 
Sequences with the first are characterized by $w_0=1$ and $w_{2n+2}=w_{2n+1}+w_n$ but the choice of $w_{2n+1}$ is free for $n\ge 1$, see \cite{Niederreiter88-b}. 
Sequences with the latter are characterized by $w_{2n+1}=w_n+1$ but the choice of any $w_{2n}$ is free, see \cite{domewi}.

\section{Pattern sequences with the all one pattern}
\label{sec3}

In this section we slightly improve Theorem \ref{thm:general}  for the binary pattern sequences when the pattern $P$ is the all one string $11\dots 1$ of length $k$ (that is $a=2^k-1$).

\begin{thm}\label{thm:RudinShapiro}
The $N$th linear complexity $L(r_n,N)$ of the sequence $(r_n)$ defined by \eqref{eq:recurence} with $a=2^k-1$ (and $p=2$) satisfies
\[ 
 L(r_n,N)=
 \left\{
 \begin{array}{cl}
  2(2^k-1)\left\lfloor \frac{N}{4(2^k-1)}\right\rfloor+2^k & \text{if }2^k \leq N \bmod 4(2^k-1) \leq 3(2^k-1),\\[5pt]
  2(2^k-1)\left\lfloor \frac{N+2^k-2}{4(2^k-1)}\right\rfloor & \text{otherwise.}  
 \end{array}
 \right.
\]
\end{thm}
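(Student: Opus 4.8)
The plan is to pin down the exact $N$th linear complexity of $(r_n)$ by computing the continued fraction expansion of its reversed generating function. Throughout put $Q=2^k-1$, so that the functional equation~\eqref{eq:pattern-gen-1} with $p=2$ reads $(t+1)^{Q+2}G(t)^2+(t+1)^{Q+1}G(t)+t^{Q}=0$ over $\F_2(t)$. Recall the classical dictionary between linear complexity profiles and continued fractions: for the Laurent series $\Gamma(x)=\sum_{n\ge 0}r_nx^{-n-1}\in\F_2((x^{-1}))$ with continued fraction expansion $\Gamma=[0;a_1,a_2,\dots]$ and convergents $p_i/q_i$, writing $e_i=\deg q_i$ (so $0=e_0<e_1<e_2<\cdots$ and $\deg a_i=e_i-e_{i-1}$), one has
\[
 L(r_n,N)=e_i\qquad\text{whenever}\qquad e_{i-1}+e_i\le N\le e_i+e_{i+1}-1,
\]
with the convention $e_{-1}=0$, so that $L(r_n,N)=0$ for $1\le N\le e_1-1$ and $L(r_n,N)=N$ at $N=e_1$, matching the conventions fixed in the introduction. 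Hence the theorem reduces to determining the degree sequence $(e_i)$.

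First I would convert the algebraic equation for $G$ into one for $\Gamma$. Substituting $t=1/x$ in~\eqref{eq:pattern-gen-1}, clearing denominators, and using $G(1/x)=x\,\Gamma(x)$ gives $(x+1)^{Q+2}\Gamma^2+(x+1)^{Q+1}\Gamma+1=0$. Setting $y=x+1$ (which changes neither the field $\F_2((x^{-1}))=\F_2((y^{-1}))$, nor its degree valuation, nor the degrees of the partial quotients) and clearing the leading power of $y$, the series $\Psi:=y\,\Gamma$ satisfies the Artin--Schreier relation
\[
 \Psi^2+\Psi=y^{-Q},
\]
so that $\Psi=\sum_{i\ge 0}y^{-Q2^i}$ and $\Gamma=\sum_{i\ge 0}y^{-(Q2^i+1)}$, a highly lacunary series. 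It is convenient to record the auxiliary identity $(\sum_{i\ge0}y^{-(2^i-1)Q})^{-1}=1+\Psi$ in $\F_2((y^{-1}))$, which I would verify by multiplying out.

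Next I would run the continued fraction algorithm on $\Gamma$ explicitly. Using the identity above one gets $a_0=0$ and, for $Q\ge 2$, the first partial quotient $a_1=y^{Q+1}+y$ (for $Q=1$, the Thue--Morse case, one extra constant is absorbed and $a_1=y^2+y+1$). A short computation then shows that the first complete quotient after $a_1$ is $\alpha_2=1/(\Gamma^{-1}-a_1)=y^{Q-1}(\sum_{i\ge0}y^{-2(2^i-1)Q})^{-1}$, whence $a_2=\lfloor\alpha_2\rfloor=y^{Q-1}$; one further step gives $a_3=\lfloor\alpha_3\rfloor=y^{Q+1}$ together with
\[
 \alpha_3-a_3=\Gamma^{-1}-a_1,
\]
so that $\alpha_4=\alpha_2$ and the continued fraction is eventually periodic of period two:
\[
 \Gamma=[0;\,a_1,\,\overline{y^{Q-1},\,y^{Q+1}}]\ \ (Q\ge 2),\qquad \Gamma=[0;\,y^2+y+1,\,\overline{y^2}]\ \ (Q=1).
\]
In either case $e_0=0$, $e_{2j}=2jQ$ and $e_{2j+1}=(2j+1)Q+1$ for all $j\ge 0$.

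Finally I would substitute these values of $e_i$ into the displayed dictionary and sort $N\ge 1$ into the intervals $[e_{i-1}+e_i,\,e_i+e_{i+1}-1]$: the interval for $e_{2j}=2jQ$ is $[(4j-1)Q+1,\,(4j+1)Q]$ and that for $e_{2j+1}=(2j+1)Q+1$ is $[(4j+1)Q+1,\,(4j+3)Q]$. Reading off which interval contains $N$ in terms of $N\bmod 4Q$ then produces exactly the stated closed form, with the case distinction $2^k\le N\bmod 4(2^k-1)\le 3(2^k-1)$. I expect the main obstacle to be the third step: verifying the period-two pattern — i.e.\ establishing $\alpha_3-a_3=\Gamma^{-1}-a_1$ — and nailing down the exact partial quotients $a_1,a_2,a_3$ (not merely their degrees), including the degenerate behaviour at $Q=1$; this requires careful but elementary manipulation of the lacunary series $\Psi$ and $\sum_i y^{-(2^i-1)Q}$ and their reciprocals. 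Once the periodicity is in hand, the passage to the formula for $L(r_n,N)$ is a routine, if slightly tedious, case analysis.
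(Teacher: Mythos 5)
Your proposal is correct, and it reaches the theorem by the same overall strategy as the paper (Niederreiter's continued-fraction description of the linear complexity profile, Lemma~\ref{lemma:Niederreiter}, applied to $R=x^{-1}G$), but the way you obtain the partial quotients is genuinely different. The paper works directly from the functional equation $(1+x)R^2+R+U^{2^k}x^{-2^k}=0$ and determines the $A_l$ by an induction that tracks $Q_l$ modulo $x^{2^{k-1}}+1$ together with valuation estimates on an auxiliary quantity $T$. You instead substitute $y=x+1$, reduce to the Artin--Schreier form $\Psi^2+\Psi=y^{-Q}$ with $\Psi=y\Gamma$ and $Q=2^k-1$, and read off the explicit lacunary solution $\Gamma=\sum_{i\ge0}y^{-(Q2^i+1)}$; the reciprocal identity $\bigl(\sum_{i\ge0}y^{-(2^i-1)Q}\bigr)^{-1}=1+\Psi$ does hold (with $S=\sum_i z^{2^i-1}$, $z=y^{-Q}$, one has $zS^2+S+1=0$ and $\Psi=zS$), and it lets you run the continued fraction algorithm in closed form. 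I checked the steps you flag as delicate: $\Gamma^{-1}=y^{Q+1}+y+y(S+1)$, then $\alpha_2=y^{Q-1}+y^{-1}(S+1)$ and $\alpha_3-a_3=y(S+1)=\Gamma^{-1}-a_1$, so the period-two pattern $\overline{y^{Q-1},\,y^{Q+1}}$ is real, and your partial quotients agree with the paper's after the substitution $x\mapsto x+1$ (e.g.\ $y^{Q-1}=(x+1)^{2^k-2}=x^{2^k-2}+x^{2^k-4}+\dots+1$). Your route is arguably cleaner: it replaces the paper's induction and valuation bookkeeping by a closed form for the series and a finite computation, at the price of handling the degenerate case $Q=1$ separately. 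One small slip: the unified statement ``$e_{2j}=2jQ$, $e_{2j+1}=(2j+1)Q+1$ in either case'' fails for $Q=1$, where it would produce the repeated values $e_1=e_2=2$, $e_3=e_4=4$, \dots; for $k=1$ the correct convergent degrees are $e_j=2j$, each occurring once, which is what your own expansion $[0;\,y^2+y+1,\,\overline{y^2}]$ gives and which still yields the stated formula. The concluding sorting of $N$ into the intervals $[e_{i-1}+e_i,\,e_i+e_{i+1}-1]$ is routine and checks out against the displayed case distinction.
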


The proof of the theorem is based on the theory of continued fractions. Thus first we summarize some basic facts about them (Section \ref{sec:Cont-fract}), 
then we prove the result (Section \ref{sec:R-S}).

\subsection{Linear complexity and continued fractions}\label{sec:Cont-fract}

Now we describe the connection between linear complexity profile of a binary sequence and the continued fraction expansion of its the generating function (see for example \cite{Niederreiter88-b}).

Let $\F_2((x^{-1}))$ be the ring of formal Laurent series
\[
R=\sum_{i=-m}^\infty r_ix^{-i}, \quad r_{-m}, r_{-m+1}, \dots \in \F_2
\]
over $\F_2$ in $x^{-1}$. In fact $\F_2((x^{-1}))$ is a field and the coefficients of the inverse of a non-zero $R$ can be computed recursively, namely, if $r_{-m}=1$, then 
\[
 R^{-1}=\sum_{i=m}^\infty z_ix^{-i}
\]
with coefficients

\begin{equation}\label{S-inverse}
 z_m=1 \quad \text{and } z_{i+m}=r_{i-m}+\sum_{j=-m+1}^{i-m-1}r_{j}z_{i-j}, \quad i=1,2,\dots
\end{equation}

Every formal non-rational Laurent series $R\in\F_2((x^{-1}))$ has a unique continued fraction expansion
\[
 R=A_0+ \cfrac{1}{A_1+
	      \cfrac{1}{A_2+
	      \cfrac{1}{\ddots}
	      }},
\]
where $A_j\in\F_2[x]$ are polynomials for $j\geq 0$ and $\deg A_j\geq 1$ for $j\geq 1$.

For 
\[
R=\sum_{i=-m}^\infty r_ix^{-i}\in\F_2((x^{-1}))
\]
we define its polynomial part by
\[
\Pol(R)= \sum_{i=-m}^0 r_ix^{-i}.
\]
The polynomials $A_j$ ($j\geq 0$) are obtained recursively by 
\begin{equation*}\renewcommand{\arraystretch}{1.5}
 \begin{array}{ll}
  A_0=\Pol(R), & B_0=R-\Pol(R),\\
  A_{j+1}=\Pol(B_j^{-1}), & B_{j+1}=B_{j}^{-1}-\Pol(B_j^{-1}), \quad j\geq 0.
 \end{array}
\end{equation*}
If the continued fraction expansion is broken off after the term $A_j$ ($j\geq 0$), we get the rational convergent $P_j/Q_j$. The polynomials $P_j, Q_j$ can be calculated recursively by

\begin{equation}\label{eq:Q-recursion}
\begin{array}{llll}
  P_{-1}=1,  &   P_0=A_0, &   P_j=A_jP_{j-1}+P_{j-2}, &  j\geq 1, \\[5pt]
    Q_{-1}=0,  &   Q_0=1,   &   Q_j=A_jQ_{j-1}+Q_{j-2}, &  j\geq 1. 
\end{array}
\end{equation}

The following formulas are shown by straightforward induction on $j$:
\begin{align} 
 & \deg (Q_j)=\sum_{h=1}^j \deg A_h \quad \text{for } j\geq 1, \label{eq:deg-Q} \\[10pt]
  & P_{j-1}Q_j+P_jQ_{j-1}=1 \quad \text{for } j\geq 1,\label{eq:P-Q-co-prime} \\[12pt]
  & R=\frac{P_j+B_jP_{j-1}}{Q_j+B_jQ_{j-1}} \quad \text{for } j\geq 0. \label{eq:S-P-Q}
\end{align}

With $x=t^{-1}\in\F_2(t)$ we obtain that the generating function of the sequence $(r_n)$ over $\F_2$ is
\[
G=\sum_{n=0}^\infty r_nx^{-n}.
\]

The following lemma \cite[Theorem 1]{Niederreiter88-b} gives an explicit description of the linear complexity profile of the sequence $(r_n)$ in terms of the polynomials $Q_j$ that are obtained 
from the continued fraction expansion of 
\begin{equation}\label{eq:def-R}
 R=x^{-1}G=\sum_{i=1}^{\infty}r_{i-1}x^{-i}.
\end{equation}

\begin{lemma}\label{lemma:Niederreiter}
For any $N\geq 1$ the $N$th linear complexity $L(r_n,N)$ is given by
\[
 L(r_n,N)=\deg Q_j
\]
where $j\geq 0$ is uniquely determined by the condition
\[
 \deg Q_{j-1}+\deg Q_{j}\leq N < \deg Q_{j}+\deg Q_{j+1}.
\]
\end{lemma}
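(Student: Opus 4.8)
The plan is to recast the $N$th linear complexity as a rational approximation problem for the Laurent series $R\in\F_2((x^{-1}))$ of \eqref{eq:def-R}, and then to read off the answer from the best-approximation property of the convergents $P_j/Q_j$.

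First I would set up the dictionary between recurrences and approximations. For a connection polynomial $Q(x)=\sum_{\ell=0}^{L}c_\ell x^\ell$ of degree $L$ (so $c_L\neq 0$), expanding the product shows that the coefficient of $x^{-m}$ in $QR$ equals $\sum_{\ell=0}^{L}c_\ell r_{m+\ell-1}$ for every $m\geq 1$. Hence a recurrence of the form \eqref{lincomp} valid for $0\leq n\leq N-L-1$ is equivalent to the vanishing of the coefficients of $x^{-1},\dots,x^{-(N-L)}$ in $QR$; writing $P=\Pol(QR)$, which has $\deg P\leq L-1<\deg Q$, this is in turn equivalent to $R-P/Q=O(x^{-(N+1)})$. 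Reading the computation in both directions yields the characterization
\[
L(r_n,N)=\min\{\deg Q : R-P/Q=O(x^{-(N+1)}) \text{ for some polynomial } P\},
\]
so $L(r_n,N)$ is the least denominator degree of a rational function matching the first $N$ coefficients $r_0,\dots,r_{N-1}$ of $R$.

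Second I would quantify how well each convergent approximates $R$. Subtracting $P_j/Q_j$ from \eqref{eq:S-P-Q} and using the coprimality identity \eqref{eq:P-Q-co-prime} collapses the numerator to give
\[
R-\frac{P_j}{Q_j}=\frac{B_j}{Q_j\,(Q_j+B_jQ_{j-1})}.
\]
Because $\deg B_j<0$ and $\deg Q_{j-1}<\deg Q_j$, the denominator has degree $2\deg Q_j$, while $\deg B_j=-\deg A_{j+1}=-(\deg Q_{j+1}-\deg Q_j)$ by \eqref{eq:deg-Q}. Thus $R-P_j/Q_j$ has leading term of degree exactly $-(\deg Q_j+\deg Q_{j+1})$, i.e.\ $P_j/Q_j$ matches precisely the first $\deg Q_j+\deg Q_{j+1}-1$ coefficients of $R$ and fails at the next one. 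In particular $Q_jR-P_j=O(x^{-\deg Q_{j+1}})$, a fact I would reuse below.

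The crux is the best-approximation (lower) bound: for $N$ in the stated range one must show that no $P/Q$ with $\deg Q<\deg Q_j$ can match the first $N\geq \deg Q_{j-1}+\deg Q_j$ coefficients of $R$. I would argue by contradiction. Assuming such a $P/Q$, consider the polynomial identity $Q_{j-1}P-QP_{j-1}=Q(Q_{j-1}R-P_{j-1})-Q_{j-1}(QR-P)$. Here $Q_{j-1}R-P_{j-1}=O(x^{-\deg Q_j})$ by Step 2 and $QR-P=O(x^{-(N+1-\deg Q)})$, so the two summands have degrees at most $\deg Q-\deg Q_j<0$ and $\deg Q_{j-1}+\deg Q-N-1<0$ under the assumptions $\deg Q<\deg Q_j$ and $N\geq \deg Q_{j-1}+\deg Q_j$. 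A polynomial of negative degree is $0$, whence $P/Q=P_{j-1}/Q_{j-1}$ (using $\gcd(P_{j-1},Q_{j-1})=1$ from \eqref{eq:P-Q-co-prime}); but by Step 2 the convergent $P_{j-1}/Q_{j-1}$ already fails to match the coefficient of $x^{-(\deg Q_{j-1}+\deg Q_j)}$, and $\deg Q_{j-1}+\deg Q_j\leq N$, a contradiction. Hence $L(r_n,N)\geq \deg Q_j$. For the matching upper bound I would note that, since $N\leq \deg Q_j+\deg Q_{j+1}-1$, the convergent $P_j/Q_j$ matches the first $N$ coefficients, so Step 1 produces a recurrence of length $\deg Q_j$ and $L(r_n,N)\leq \deg Q_j$. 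Combining the two gives $L(r_n,N)=\deg Q_j$; uniqueness of $j$ follows because $\deg Q_{j-1}+\deg Q_j$ is strictly increasing in $j$ (each $\deg A_h\geq 1$), so the half-open intervals in the hypothesis partition the admissible range of $N$. The main obstacle is executing the degree bookkeeping in this best-approximation step cleanly, in particular verifying that the two error terms are \emph{simultaneously} of negative degree exactly on the stated range of $N$.
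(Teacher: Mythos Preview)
Your argument is correct and is the classical best-approximation proof for continued fractions in $\F_2((x^{-1}))$: the dictionary in Step~1, the exact error formula $\v(R-P_j/Q_j)=-(\deg Q_j+\deg Q_{j+1})$ in Step~2, and the ``$Q_{j-1}P-QP_{j-1}$ is a polynomial of negative valuation, hence zero'' trick in Step~3 are all sound as written. The paper does not supply its own proof of this lemma but simply cites it as \cite[Theorem~1]{Niederreiter88-b}; your write-up reproduces that result along the standard lines.
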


We define the (exponential) valuation $\v$ on $\F_2((x^{-1}))$ as
\[
 \v(R)=m, \quad \text{if } R=\sum_{i=-m}^\infty r_ix^{-i}\in\F_2((x^{-1})) \text{ and } r_{-m}\neq 0.
\]
For $R=0$ we put $\v(R)=-\infty$. We have the following properties of $\v$. For $R,S\in \F_2((x^{-1})) $ we have
\begin{enumerate}[(i)]
 \item $\v(RS)=\v(R)+\v(S)$,
 \item $\v(R+S)\leq \max\{\v(R),\v(S)\}$,
 \item $\v(R+S)= \max\{\v(R),\v(S)\}$ if $\v(R)\neq \v(S)$.
\end{enumerate}

The valuation $\v$ extends the degree function on $\F_2[x]$: for $f\in\F_2[x]$ we have $\v(f)=\deg f$ and
\[
 \v\left( \frac{f}{g}\right)=\deg f -\deg g \quad \text{for } f,g\in\F_2[x], g\neq 0.
\]

\subsection{Proof of Theorem \ref{thm:RudinShapiro}}\label{sec:R-S}

It follows from the functional equation \eqref{eq:pattern-gen-1} for the generating function of the sequence $(r_n)$ that the function $R$ defined in \eqref{eq:def-R} satisfies 
\begin{equation}\label{eq:function-equation}
 (1+x)R^2+R+U^{2^k}x^{-2^k}=0,
\end{equation}
where $U=\sum_{i=0}^\infty x^{-i}$.

\begin{lemma}\label{lemma:Q-mod-x}
 If for a polynomial $Q\in\F_2[x]$ we have
 \[
  QU^{2^{k}}=\sum_{i=-\deg Q}^{\infty} b_ix^{-i},
 \]
 then  $b_i=b_j$ for $i,j\geq 0$ with $i\equiv j \mod 2^{k}$ and
\[
 Q\equiv b_1 x^{2^{k}-1} + b_2 x^{2^{k}-2}+ \dots + b_{2^{k}} \mod x^{2^{k}}+1.
\]

\end{lemma}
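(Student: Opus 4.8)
The plan is to exploit the fact that $U$ is a rational function in $\F_2((x^{-1}))$. Since $xU=x+U$ and we work in characteristic $2$, we have $(x+1)U=x$, and raising to the $2^k$-th power together with $(x+1)^{2^k}=x^{2^k}+1$ gives $(x^{2^k}+1)U^{2^k}=x^{2^k}$. Writing $d=\deg Q$ and $QU^{2^k}=\sum_{i\geq -d}b_ix^{-i}$ (consistent since $\v(QU^{2^k})=\v(Q)+\v(U^{2^k})=d$), multiplying through by $x^{2^k}+1$ then yields the \emph{polynomial} identity $(x^{2^k}+1)QU^{2^k}=Qx^{2^k}$, which is the engine of the whole argument.

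First I would extract the periodicity of the coefficients $b_i$. Expanding, the coefficient of $x^{-j}$ in $(x^{2^k}+1)\sum_i b_ix^{-i}$ equals $b_j+b_{j+2^k}$, while $Qx^{2^k}$ involves only monomials $x^m$ with $m\geq 2^k$, so its coefficient of $x^{-j}$ vanishes for every $j>-2^k$, in particular for all $j\geq 0$. Hence $b_{j+2^k}=b_j$ for all $j\geq 0$, and iterating gives $b_i=b_j$ whenever $i,j\geq 0$ and $i\equiv j\bmod 2^k$, which is the first assertion.

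Next I would read off $Q\bmod(x^{2^k}+1)$ by separating the polynomial part. Write $QU^{2^k}=P+T$ with $P=\Pol(QU^{2^k})\in\F_2[x]$ of degree $\leq d$ and $T=\sum_{i\geq 1}b_ix^{-i}$, so that $Qx^{2^k}=(x^{2^k}+1)P+(x^{2^k}+1)T$. The key computation is that $(x^{2^k}+1)T=\sum_{i\geq 1}b_ix^{2^k-i}+\sum_{i\geq 1}b_ix^{-i}$ is in fact the polynomial $b_1x^{2^k-1}+b_2x^{2^k-2}+\dots+b_{2^k}$: the terms with $1\leq i\leq 2^k$ in the first sum give exactly this polynomial, and all remaining negative-degree terms cancel in pairs since $b_{\ell+2^k}+b_\ell=0$ for $\ell\geq 1$ by the periodicity just established. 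Reducing the resulting identity $Qx^{2^k}=(x^{2^k}+1)P+\bigl(b_1x^{2^k-1}+\dots+b_{2^k}\bigr)$ modulo $x^{2^k}+1$ and using $x^{2^k}\equiv 1$ on the left gives $Q\equiv b_1x^{2^k-1}+b_2x^{2^k-2}+\dots+b_{2^k}\bmod x^{2^k}+1$.

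I do not expect a serious obstacle: the only delicate point is the ordering of the two steps — the periodicity $b_{i+2^k}=b_i$ must be proved before it can be invoked to kill the negative-degree tail of $(x^{2^k}+1)T$ — together with routine care about index ranges so that no monomial is miscounted when comparing coefficients of the polynomial identity.
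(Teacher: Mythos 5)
Your proof is correct, but it takes a different route from the paper's. The paper argues directly from the expansion $U^{2^k}=\sum_{m\ge 0}x^{-2^km}$ (Frobenius applied to $U$): writing $Q=\sum_j q_jx^j$, it reads off the explicit formula $b_i=\sum_{j:\,i+j\equiv 0\bmod 2^k}q_j$ for $i\ge 0$, from which the periodicity of the $b_i$ is immediate and the congruence for $Q$ follows by grouping the $q_j$ according to $j\bmod 2^k$. You instead encode the same Frobenius fact as the closed form $(x^{2^k}+1)U^{2^k}=x^{2^k}$ and extract both assertions by comparing coefficients in the single polynomial identity $(x^{2^k}+1)QU^{2^k}=Qx^{2^k}$, never computing the $b_i$ in terms of the coefficients of $Q$. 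The paper's computation is shorter and gives the extra information of an explicit formula for each $b_i$; your argument is slightly longer but more structural, cleanly separates the two conclusions (periodicity first, then the reduction of $Q$ modulo $x^{2^k}+1$ after splitting off the polynomial part), and correctly handles the one delicate point, namely that the periodicity must be established before it is used to cancel the negative-degree tail of $(x^{2^k}+1)T$. Both proofs are complete and valid.
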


\begin{proof}
 Write $Q=x^d+q_{d-1}x^{d-1}+\dots +q_1x+q_0$. Then for $i \geq 0$ the coefficient of $QU^{2^k}$ of $x^{-i}$ is
 \begin{equation*}
  b_{i}=\sum_{j: \ i+j\equiv 0 \bmod 2^{k}} q_j,
 \end{equation*}
and the result follows. 
\end{proof}

\begin{proof}[Proof of Theorem \ref{thm:RudinShapiro}]
We prove the following assertions by induction.
 \begin{enumerate}[(i)]
  \item \label{lemma:R-S-1-ii} $A_1=x^2+x+1$ if $k=1$ and $A_1=x^{2^k}+x$ if $k\geq 2$.
  \item \label{lemma:R-S-1-iii}For $j\geq 2$, $A_j=x^2+1$ if $k=1$ and for $j\geq 1$,
  $A_{2j}=x^{2^k-2}+x^{2^k-4}+\dots +x^2+1$, $A_{2j+1}=x^{2^k}+1$ if $k\geq 2$. 
     \item \label{lemma:R-S-1-iv}For $j\geq 0$, $Q_{j}\equiv 1 \mod x+1$ if $k=1$ and  $Q_{2j}\equiv 1 \mod x^{2^{k-1}}+1$, $Q_{2j+1}\equiv x+1 \mod x^{2^{k-1}}+1$ for $k\geq 2$. 
 \end{enumerate} 
 
Then the result follows from Lemma \ref{lemma:Niederreiter},  \eqref{lemma:R-S-1-ii}, \eqref{lemma:R-S-1-iii} and (\ref{eq:deg-Q}).
 
The first part follows from straightforward computation. Namely, observe that the first $2^{k+1}$ elements of the sequence $(r_n)$ are zeros, except the elements $r_n$ with $n=2^k-1$ and $n=2^{k+1}-2$. Thus
\[
  R=\frac{1}{x^{2^k}}+\frac{1}{x^{2^{k+1}-1}}+ \sum_{i=2^{k+1}+1}^\infty r_{i-1}x^{-i},
\]
whence by \eqref{S-inverse} we have
\[
 R^{-1}=x^{2^k}+x+x^{2-2^k}+\dots
\]
so
\[
 A_1=\Pol (R^{-1})=
 \left\{
 \begin{array}{ll}
  x^{2}+x+1 & \text{if $k=1$},\\
  x^{2^k}+x & \text{if $k\geq 2$}
 \end{array}
\right.
\]
which proves \eqref{lemma:R-S-1-ii}.

By Corollary \ref{cor:pattern}, $R$ is irrational and thus $\deg A_j \geq 1$ for all $j\geq 1$. Now by \eqref{eq:deg-Q}, \eqref{eq:P-Q-co-prime}, (\ref{eq:S-P-Q}) and the properties of $\v$ it follows for $l\geq 1$ that
 \begin{equation}\label{eq:v-Q-0}
  \v(Q_{l-1}R-P_{l-1})=-\v(Q_{l})
 \end{equation}
whence
\begin{equation}\label{eq:v-Q}
  \v(Q_{l-1}^2R^2-P^2_{l-1})=-2\v(Q_{l}).
\end{equation}

Consider
\begin{align}
T&=(x+1)P_{l-1}^2+P_{l-1}Q_{l-1}+U^{2^k}x^{-2^k}Q_{l-1}^2   \label{eq:T-def-1} \\ 
 &=(x+1)\left(Q^2_{l-1}R^2-P_{l-1}^2 \right)+Q_{l-1}(P_{l-1}+Q_{l-1}R),\label{eq:T-def-2}
\end{align}
where the second equality follows from  \eqref{eq:function-equation}.
Now we have
\begin{equation}\label{eq:A-T-deg}
 \v(T)=-\v(A_{l}), \quad l\geq 1,
\end{equation}
by \eqref{eq:deg-Q}, \eqref{eq:v-Q-0},   \eqref{eq:v-Q}, \eqref{eq:T-def-2}  and the properties of $\v$.
It follows from \eqref{eq:Q-recursion}, \eqref{eq:P-Q-co-prime} and \eqref{eq:S-P-Q}, that
\begin{equation}\label{eq-A_l-inverse}
 Q_{l-1}(P_{l-1}+Q_{l-1}R)=Q_{l-1}\frac{1}{A_{l}Q_{l-1}+Q_{l-2}+B_{l}Q_{l-1}}=\frac{1}{A_{l}+\frac{Q_{l-2}}{Q_{l-1}}+B_{l}}
\end{equation}
where we have
\begin{equation}\label{eq:deg-B-Q_{l-2}/Q_{l-1}}
 \v(B_{l}), \v\left(\frac{Q_{l-2}}{Q_{l-1}}\right)<0.
\end{equation} 
On the other hand 
\begin{align}\label{eq:large-deg}
 \v\left((x+1)\left(Q^2_{l-1}R^2-P_{l-1}^2 \right)\right)&=1-2\v(Q_l)=-2\v(A_l)+1-2\v(Q_{l-1}) \notag \\
 &\leq -2\v(A_l)+1-2\v(A_1)<-2\v(A_l)
\end{align}
for $l\ge 2$.

We now prove \eqref{lemma:R-S-1-iii} and \eqref{lemma:R-S-1-iv} by induction. We remark that the assertion \eqref{lemma:R-S-1-iv} for $Q_{0}$ and $Q_1$ follows from \eqref{eq:Q-recursion} and \eqref{lemma:R-S-1-ii}. 

Take $l\geq 2$. If $k=1$ or $k\geq 2$ and $l=2j+1$ is odd, then $Q_{l-1}\equiv 1 \mod x^{2^{k-1}}+1$, thus $Q_{l-1}^2\equiv 1 \mod x^{2^{k}}+1$. By Lemma \ref{lemma:Q-mod-x}, the coefficients of $x^{-i}$ ($i=1,\dots, 2^k-1$) in $Q_{l-1}^2U^{2^k}$ are 
all zero but the coefficient of $x^{-2^k}$ is one. Since $R$ is irrational, the degree $\deg A_l$ and so $\v(T)$ cannot be zero. Thus by \eqref{eq:T-def-1}  $x^{-2^k}$ is the leading term of $T$ so $\deg (A_l)=2^k$ by \eqref{eq:A-T-deg}. 

By Lemma \ref{lemma:Q-mod-x}, the coefficient of $x^{-i}$ ($i=2^k+1,\dots, 2^{k+1}-1$) in $Q_{l-1}^2U^{2^k}$ is zero. Thus by \eqref{eq:large-deg}  
\[
 Q_{l-1}(P_{l-1}+Q_{l-1}R)= x^{-2^k}+x^{2^{k+1}}+\sum_{i=2^{k+1}}^{\infty}a_ix^{-i}.
\]
It follows from \eqref{eq:deg-B-Q_{l-2}/Q_{l-1}} that
\[
 A^{-1}_{l}= x^{-2^k}+x^{2^{k+1}}+\sum_{i=2^{k+1}}^{\infty}a'_ix^{-i},
\]
whence $A_l=x^{2^k}+1$ by \eqref{S-inverse}.

If $k\geq 2$ and $l=2j$ is even, the proof of \eqref{lemma:R-S-1-iii} is similar. Since $Q_{l-1}\equiv x+1 \mod x^{2^{k-1}}+1$, thus $Q_{l-1}^2\equiv x^2+1 \mod x^{2^{k}}+1$. By Lemma \ref{lemma:Q-mod-x}, the coefficients of $x^{-i}$ ($i=1,\dots, 2^k-3$) in $Q_{l-1}^2U^{2^k}$ are all zero but the coefficient of $x^{-2^k+2}$ is one. Thus by \eqref{eq:T-def-1}  $x^{-2^k+2}$ is the leading term of $T$ so $\deg (A_l)=2^k-2$ by \eqref{eq:A-T-deg}. By Lemma \ref{lemma:Q-mod-x}, the coefficient of $x^{-i}$ ($i=2^k-1,\dots, 2^{k+1}-1$) in $Q_{l-1}^2U^{2^k}$ are all zeros except for  $i=2^{k+1}-2$.
Thus By \eqref{eq:large-deg}  
\[
 Q_{l-1}(P_{l-1}+Q_{l-1}R)= x^{-2^k+2}+x^{2^{k}} +x^{-2^{k+1}-2}+\sum_{i=2^{k+1}-1}^{\infty}a_ix^{-i}.
\]
It follows from \eqref{eq:deg-B-Q_{l-2}/Q_{l-1}} that
\[
 A^{-1}_{l}= x^{-2^k+2}+x^{2^{k}} +x^{-2^{k+1}-2}+\sum_{i=2^{k+1}-1}^{\infty}a'_ix^{-i},
\]
whence $A_{l}=x^{2^k-2}+x^{2^k-4}+\dots +x^2+1$ by \eqref{S-inverse}, which proves \eqref{lemma:R-S-1-iii}.

Since $x^{2^k}+1\mid A_l$ for $l\geq 2$, we have by \eqref{eq:Q-recursion} that
\[
Q_l=A_lQ_{l-1}+Q_{l-2}\equiv Q_{l-2} \mod x+1,
\]
which proves \eqref{lemma:R-S-1-iv}.
\end{proof}

\subsection*{Acknowledgement}
The authors are partially supported by the Austrian Science Fund FWF Project 5511-N26 which is part of the Special Research Program {\color{black}``}Quasi-Monte Carlo Methods: Theory and Applications''.

\end{document}